\def\set@curr@file#1{%
  \begingroup
    \escapechar\m@ne
    \xdef\@curr@file{\expandafter\string\csname #1\endcsname}%
  \endgroup
}
\def\quote@name#1{"\quote@@name#1\@gobble""}
\def\quote@@name#1"{#1\quote@@name}
\def\unquote@name#1{\quote@@name#1\@gobble"}
\newtheorem{thm}{Theorem}
\newtheorem{prop}{Proposition}
\newtheorem{cor}{Corollary}
\newtheorem{lemma}{Lemma}
\DeclareMathOperator\re{Re}
\DeclareMathOperator\im{Im}
\DeclareMathOperator\Arg{Arg}
\DeclareMathOperator\sgn{sgn}
\DeclareMathOperator\erfc{erfc}
\DeclareMathOperator\Res{Res}
\DeclareMathOperator\LN{LN}
\def\r{{\mathbb R}}
\def\C{{\mathbb C}}
\def\N{{\mathbb N}}
\def\i{{\textnormal i}}
\def\d{{\textnormal d}}
\def\spc{\phantom{sss}}
\title{The Laplace transform of the lognormal distribution}
\author{Justin Miles%
	\thanks{Dept. of Mathematics and Statistics, York University, 4700 Keele Street, Toronto, ON, M3J 1P3, Canada.\newline Email: justinm@mathstat.yorku.ca}}
\date{}
\begin{document}

\maketitle

\begin{abstract}
We study the analytical properties of the Laplace transform of the lognormal distribution. Two integral expressions 
for the analytic continuation of the Laplace transform of the lognormal distribution are provided, one of which takes the form of a Mellin-Barnes integral. As
a corollary, we obtain an integral expression for the characteristic function; we show that the integral expression derived by Leipnik in \cite{Leipnik1991} is
incorrect. We present two approximations for the Laplace transform of the lognormal distribution, both valid in $\C \setminus(-\infty,0]$. 
In the last section, we discuss how one may use our results to compute the density of a sum of independent lognormal random variables.
\end{abstract}

\vspace{0.25cm}

{\vskip 0.15cm}
 \noindent {\it Keywords}: lognormal distribution, Laplace transform, characteristic function, analytic continuation, Mellin transform, series approximation
 \\
 \noindent {\it 2010 Mathematics Subject Classification }: Primary 60E10, Secondary 62E17. 


\section{Introduction}\label{intro}

A positive random variable $X$ is said to have a lognormal distribution with parameters $\mu\in \r$ and $\sigma>0$, written
$X \sim \LN(\mu,\sigma^2)$, if it has probability density function given by
\begin{align*}
 f(x;\mu,\sigma)=\frac{1}{\sqrt{2\pi}\sigma x}\exp{\left[-\frac{(\ln{x}-\mu)^2}{2\sigma^2}\right]},\spc x>0.
\end{align*} 

The lognormal distribution has a wide range of applications in the natural sciences and fields like finance, actuarial science, economics and engineering. 
Integral transforms, such as the Laplace and Fourier transforms, of the lognormal 
distribution have received considerable attention in the literature for several decades. 
The Laplace transform of $X$, henceforth denoted by $\phi$, is defined by 
\begin{equation}\label{def_phi}
 \phi(z;\mu,\sigma) := \mathbb{E}\left[e^{-zX}\right]
                       = \int_{0}^{\infty} e^{-zx}f(x;\mu,\sigma) \d x,\spc\re{(z)}\geq0.
\end{equation}
The characteristic function of $X$, henceforth denoted by $\varphi$, is the restriction of $\phi$ to the imaginary axis: 
\begin{equation}\label{def_varphi}
 \varphi(t;\mu,\sigma):=\mathbb{E}\left[e^{\i tX}\right]=\phi(- \i t;\mu,\sigma),\phantom{a}t\in\mathbb{R}.
\end{equation}

Since these integral transforms have no
known closed form, there has been substantial effort to put forth viable approximation methods (see \cite{Asmussen2016} for a 
thorough overview and numerical comparison of several methods).
Some authors, such as Barouch and Kaufman \cite{Barouch1976}, Barakat \cite{Barakat1976}, Holgate \cite{Holgate1989}, and Leipnik \cite{Leipnik1991}, have proposed series 
representations for \eqref{def_varphi}. Others, including Gubner \cite{Gubner2006} and Tellambura
and Senaratne \cite{Tellambura2010}, have proposed numerical integration methods for computing \eqref{def_phi}. Gubner's numerical integration procedure reduces oscillations of the integrand by deforming the contour of integration.
Tellambura and Senaratne improved upon Gubner's method by deriving the steepest-descent contour and by providing two, related,
closed-form contours.

More recently, Asmussen et al. \cite{Asmussen2016} used a modified version of Laplace's method to derive an asymptotically
equivalent, closed-form approximation for \eqref{def_phi}. Moreover, Asmussen et al. \cite{Asmussen2016} constructed a 
Monte Carlo estimator and, based on this framework, Laub et al.
\cite{Laub2016} generalized the approach to approximate the Laplace transform of a finite sum of dependent lognormals.

There are several disadvantages with existing methods in the literature. Examples include:
\begin{itemize}
 \item The majority of methods are only valid, at most,
for arguments in the right half plane, $\{z\in\mathbb{C}:\re{(z)}\geq0\}$. As a result, one must exclude some efficient paths of integration when performing an inversion of the
Laplace transform.

\item  It appears that there are no convergent series representations in the literature that are valid on the entire 
domain of analyticity. Since $\phi$ is not analytic at the origin, the
taylor series representation centered at any point will have finite radius of convergence.
For example, the formal Taylor series of $\phi$, centered at the origin, is given by 

\begin{equation}\label{maclaurin_phi}
 \sum_{n=0}^{\infty} \frac{(-z)^n}{n!}e^{\mu n +\frac{\sigma^2}{2}n^2}.
\end{equation}

It is easy to see that the series (\ref{maclaurin_phi}) diverges for all $z\neq0$. 

\item In 1991, Leipnik \cite{Leipnik1991} presented the following expression for the characteristic function: Let $X \sim \LN(0,\sigma^2)$,
then, for $t>0$ and $0<k<1$, the characteristic function is given by

\begin{equation}\label{leipnik_int_expression}
\varphi(t;0,\sigma) \stackrel{?}{=} \frac{1}{2\pi} \int_{k-\i\infty}^{k+\i\infty} \sin{(\pi s)}\Gamma(s)e^{-(\ln{t}+\i\frac{\pi}{2})s+
            \frac{\sigma^2}{2}s^2} \d s.
\end{equation}

It has been reported that the right-hand side of \eqref{leipnik_int_expression}, and the 
subsequent series for $\varphi$ derived in \cite{Leipnik1991}, are unreliable in numerical computations (see \cite{Dufresne2008}, and \cite{Asmussen2016}). We claim that the 
result is incorrect. To see that \eqref{leipnik_int_expression} is incorrect, observe that the integrand is entire and that one may take $k\in\r$. After 
shifting the contour of integration to the left of the origin (taking $k<0$), it is easy to see that the expression in \eqref{leipnik_int_expression} is $O(t^{|k|})$, 
as $t\to0$. Hence, the expression converges to $0$ as $t\to0$, violating the fact that the characteristic function must converge to $1$ as $t\to0$. 

Leipnik obtains \eqref{leipnik_int_expression} by first deriving a functional differential equation, and then solving it using a method due to de Bruijn. In this method, the differential 
equation is transformed into a forward difference-differential equation and an ansatz solution is posed. It appears that Leipnik imposed an inconvenient condition on the 
ansatz; specifically, in equation (25) of \cite{Leipnik1991}, he imposed the condition $S(z-1)=-S(z)$ when he could have taken $S(z-1)=S(z)$. As a result, Leipnik searched 
for an 
anti-periodic solution for $S$ and ultimately obtained $S(z)=\sin{\pi z}$ rather than $S(z)=1$.
\end{itemize}

In this paper, we explore the analytic continuation of the Laplace transform of the lognormal distribution and present new, efficient, series  \emph{approximations} of 
$\phi$ that are valid on $\C\setminus(-\infty,0]$.
In Sections \ref{section2} and \ref{section3}, we provide two (integral) expressions of the analytic continuation to $\C\setminus(-\infty,0]$, one of which takes the form of a Mellin-Barnes 
integral. As a corollary, we obtain an integral expression for the characteristic function $\varphi$ (this expression is stated by Dufresne in \cite{Dufresne2008} without 
proof).
In the third section of this paper, we exploit the Mellin-Barnes integral expression and use knowledge of the gamma function to derive series approximations 
for $\phi$ that are valid for arguments in $\C\setminus(-\infty,0]$.

The first approximation we present in Section \ref{section4}
is a convergent series for which the error term is uniformly bounded on $\C\setminus(-\infty,0]$ by a constant that can be made arbitrarily small 
(by choice of a parameter). Furthermore, the approximation is asymptotic to $\phi$ as the magnitude of the argument decreases to zero. 
The second approximation we present is a sum which improves as the parameter $\sigma\to\infty$. The terms of the 
series/sum are composed of expressions involving error functions and/or Hermite polynomials. The approximations are used to compute $\phi$ for several real 
arguments and the results compared to the values obtained by way of numerical integration.

In the last section, we discuss how one may use the analytic continuation of $\phi$ to compute the density of a sum of independent lognormals via Laplace inversion. 
By deforming the contour of the Bromwich integral to a Hankel contour, we obtain a real integral with an integrand which decays 
exponentially. The result is an integral which is easily evaluated numerically.


\section{The analytic continuation of the Laplace transform of the lognormal distribution}\label{section2}

The integral definition of the function $\phi$, given by \eqref{def_phi}, is finite when $\re{(z)}\geq0$ and it is well known that it is analytic in the right
half plane $\C^+=\{ z\in\C: \re{(z)}>0\}$. It will be convenient for us to express this function as

\begin{align}\label{phi_integral1}
  \phi(z;\mu,\sigma)&= C(\mu,\sigma) \int_{0}^{\infty} \frac{1}{x}\exp{\left[-zx-\frac{1}{2\sigma^2}(\ln{x})^2+\frac{\mu}{\sigma^2}
                          \ln{x}\right]} \d x,
\end{align}
where $C(\mu,\sigma):=(2\pi\sigma^2)^{-1/2}\exp{\left(-\mu^2/2\sigma^2\right)}$. Noting that the integral in \eqref{phi_integral1} is finite for all 
$\mu\in\C$, we define 
\begin{align}\label{def_bigphi}
 \Phi(z,w;\sigma) &:= C(w,\sigma)\int_{0}^{\infty} \frac{1}{x}\exp{\left[-zx-\frac{1}{2\sigma^2}(\ln{x})^2+\frac{w}{\sigma^2}
                          \ln{x}\right]} \d x, \spc (z,w)\in \overline{\C^+}\times\C.
\end{align}
Since $\Phi(z,\mu;\sigma)=\phi(z;\mu,\sigma)$, the function $\Phi$ is an extension of $\phi$. Here $\overline{\C^+}$ denotes the closure of $\C^+$, 
and throughout this paper we take the logarithm to be complex with the principal branch. The main result of this section is given in the following theorem. It provides us with an expression for
$\phi(z;\mu,\sigma)$ which is analytic on $\C\setminus(-\infty,0]$.
\begin{thm}\label{thm1}
Let $\sigma>0$ and let $\Phi$ be defined by \eqref{def_bigphi}. Then the Laplace transform of $X \sim \LN(\mu,\sigma^2)$ is analytically
continued to $\C\setminus(-\infty,0]$ by the equation
\begin{align}\label{int_expression_phi_1}
\phi(z;\mu,\sigma) &= \Phi(1,\mu+\ln{z};\sigma).
\end{align}
\end{thm}

\begin{proof} Fix $\sigma>0$ and let $F(z,w,x)=x^{-1}\exp{[-zx-(\ln{x})^2/2\sigma^2 +w\ln{x}/\sigma^2]}$ so that
\begin{align*}
\Phi(z,w;\sigma) &= C(w,\sigma)\int_{0}^{\infty} F(z,w,x) \d x.
\end{align*}
The function $C(\cdot,\sigma)$ is entire, and, for each $z\in\C^+$, $F(z,\cdot \phantom{,},\cdot)$ is continuous on $\C\times(0,\infty)$, and, 
for each pair  
$(z,x)\in \C^+\times(0,\infty)$, $F(z,\cdot \phantom{,},x)$ is entire. Thus, for each $n\in\N$, and for each 
$z\in\C^+$, the function $\Phi_n(z,\cdot \phantom{,};\sigma)$ defined by
\begin{align*}
 \Phi_n(z,w;\sigma)&:=  C(w,\sigma) \int_{\frac{1}{n}}^{n} F(z,w,x) \d x,\spc w\in\C
\end{align*}
is entire. Since $\Phi_n(z,\cdot\phantom{,};\sigma) \to \Phi(z,\cdot\phantom{,};\sigma)$ uniformly on compact subsets of $\C$, the function 
$\Phi(z,\cdot\phantom{,};\sigma)$ is entire. To prove the theorem, we make the formal substitution $ct=x$ in \eqref{def_bigphi} which yields
\begin{align}\label{continuation_of_Phi}
\Phi(z,w;\sigma) &=  C(w,\sigma) \int_{0}^{\infty} \frac{1}{ct}\exp{\left[-zct-\frac{1}{2\sigma^2}(\ln{ct})^2+\frac{w}{\sigma^2}
                        \ln{ct}\right]} c \d t\nonumber\\
                 &=  C (w,\sigma)\exp{\left(-\frac{1}{2\sigma^2}(\ln{c})^2 + \frac{w}{\sigma^2}\ln{c}\right)}\int_{0}^{\infty} \frac{1}{t}
		      \exp{\left[-zct-\frac{1}{2\sigma^2}(\ln{t})^2+\frac{(w-\ln{c})}{\sigma^2}\ln{t}\right]} \d t\nonumber\\
                 &=\Phi(cz,w-\ln{c};\sigma),
\end{align}
where \eqref{continuation_of_Phi} holds provided $cz\in\C^+$. Setting $c=1/z$ we obtain 
\begin{align}
 \Phi(z,w;\sigma) &=\Phi(1,w+\ln{z};\sigma).
\end{align}
Therefore, since $\Phi(z,\cdot\phantom{,};\sigma)$ is an entire function for each $z\in\C^+$, setting $w=\mu$ yields an analytic continuation for the Laplace transform 
of $X$ defined by \eqref{def_phi}.
\end{proof}


\section{The analytic continuation of \texorpdfstring{$\phi$}{} as a Mellin-Barnes integral}\label{section3}

In this section we derive an alternate expression for $\phi$, the Laplace transform of
$X \sim \LN(\mu,\sigma^2)$, in the form of a Mellin-Barnes integral. As a consequence, we obtain the corresponding
expression for the characteristic function of the lognormal random variable $X$. As far as we are aware, there is no other explicit proof of 
the result in the literature. For convenience, we will often write $f(x)$ and $\phi(z)$ instead of $f(x;\mu,\sigma)$ and $\phi(z;\mu,\sigma)$
with the understanding that $\mu$ and $\sigma$ are the parameters of $X$.
\begin{thm}\label{thm2}
Let $X \sim \LN(\mu,\sigma^2)$ and $k>0$. Then the Laplace transform of $X$ has integral expression
\begin{align}\label{int_expression_phi_2}
\phi(z) &= \frac{1}{2\pi i} \int_{k-\i\infty}^{k+\i\infty} \Gamma(s)e^{-(\mu+\ln{z})s+\frac{\sigma^2}{2}s^2} \d s,\spc z\in\C\setminus(-\infty,0].
\end{align}
\end{thm}
\begin{proof} The Mellin transform of $\phi$, denoted by $M[\phi;\cdot\phantom{,}]$, is defined by
\begin{align*}
M[\phi;s] &= \int_{0}^{\infty} z^{s-1}\phi(z) \d z, \spc s=k+\i t.
\end{align*}
Using the definition of the Laplace transform, Fubini's theorem, and the fact that
\begin{align*}
\int_{0}^{\infty} z^{s-1}e^{-zx} \d z &= x^{-s}\Gamma(s), \spc \re{(s)}>0,
\end{align*}
we have 
\begin{align*}
M[\phi;s]  &= \int_{0}^{\infty} z^{s-1} \left( \int_{0}^{\infty} e^{-zx}f(x) \d x \right)\d z\\
           &= \int_{0}^{\infty} f(x) \left( \int_{0}^{\infty} z^{s-1}e^{-zx} \d z \right)\d x\\
           &= \Gamma(s) \int_{0}^{\infty} x^{-s}f(x) \d x\\
           &= \Gamma(s) e^{-\mu s+\frac{\sigma^2}{2}s^2},\spc \re(s)>0.
\end{align*}
This also shows that $z^{k-1}\phi(z)\in L^1(0,\infty)$ for $k>0$. Furthermore, $\phi$ 
is continuous on $(0,\infty)$ and so, by Mellin's inversion formula (\cite{Titchmarsh} Pg.46, Theorem 28), 
\begin{equation*}
\phi(z) = \frac{1}{2\pi \i} \int_{k-\i\infty}^{k+\i\infty} z^{-s}M[\phi;s] \d s
        = \frac{1}{2\pi \i} \int_{k-\i\infty}^{k+\i\infty} \Gamma(s)e^{-(\mu+\ln{z})s+				          \frac{\sigma^2}{2}s^2} \d s, \spc z\in(0,\infty).
\end{equation*}

We can extend this function to take arguments in $\C\setminus(-\infty,0]$, and, in fact, it is analytic on this set. Therefore, our new expression for $\phi$ must agree with the analytic continuation given in Section \ref{section2} by the uniqueness of analytic continuation.
\end{proof}
Since $\varphi(t)=\phi(-it)$, we have the following corollary to Theorem \ref{thm2}
\begin{cor}\label{cor1}
Let $X \sim \LN(\mu,\sigma^2)$ and $k>0$. Then the characteristic function of 
$X$ has integral expression
\begin{align}
\varphi(t) &= \frac{1}{2\pi \i} \int_{k-\i\infty}^{k+\i\infty} \Gamma(s)e^{-(\mu+\ln{|t|}-\sgn{(t)}\i\frac{\pi}{2})s+
            \frac{\sigma^2}{2}s^2} \d s, \spc t\in\r\setminus\{0\}.
\end{align}
\end{cor}


\section{Series approximations and numerical computation of \texorpdfstring{$\phi$}{}}\label{section4}

In the first subsection we present series approximations which may be used to compute $\phi$ on $\C\setminus(-\infty,0]$. In the second subsection, we present numerical results using the series approximations and compare the error using numerical integration as a benchmark.

\subsection{Series approximations}\label{section4.1}

The following theorem introduces a convergent series that approximates $\phi$ with an error that can be made arbitrarily small. Note that the approximation bears some resemblance to the results of Barouch and Kaufman \cite{Barouch1976} who investigated series approximations of the characteristic function. 
\begin{thm}\label{thm3}
 Let $X \sim \LN(\mu,\sigma^2)$ and $\alpha\geq1$. Then the Laplace transform of $X$ has expression
 \begin{equation}\label{series_approx_1}
  \phi(z) = \sum_{n=0}^{\infty}\frac{(-z)^n}{n!}e^{\mu n+\frac{\sigma^2}{2}n^2}\cdot\frac{1}{2}\erfc{\left(\frac{\mu+\ln{(z/\alpha)}+\sigma^2n}{\sqrt{2}\sigma}\right)}\\
               +O(e^{-\alpha}), \spc z\in\C\setminus(-\infty,0].
 \end{equation}
Furthermore,
\begin{equation}\label{series_approx_1_asymptotic}
  \phi(z) \sim \sum_{n=0}^{\infty}\frac{(-z)^n}{n!}e^{\mu n+\frac{\sigma^2}{2}n^2}\cdot\frac{1}{2}\erfc{\left(\frac{\mu+\ln{(z/\alpha)}+\sigma^2n}{\sqrt{2}\sigma}\right)},
  \text{ as }z\to0.
\end{equation}
The function $\erfc$ is the complimentary error function.
\end{thm}
\begin{proof}
Let $\alpha\geq1$. For $\re{(s)}>0$, we may write $\Gamma(s)=\gamma(s,\alpha)+\Gamma(s,\alpha)$ where $\gamma(\cdot,\alpha)$ and $\Gamma(\cdot,\alpha)$ are the upper and lower incomplete gamma functions defined by 
\begin{equation*}
\gamma(s,\alpha) := \int_{0}^{\alpha} t^{s-1}e^{-t}\d t, \text{ and } \Gamma(s,\alpha) := \int_{\alpha}^{\infty} t^{s-1}e^{-t}\d t.
\end{equation*}
Substituting this sum into \eqref{int_expression_phi_2}, and replacing $\gamma(\cdot,\alpha)$ with the power series expansion
\begin{equation*}
 \gamma(s,\alpha)= \sum_{n=0}^{\infty} \frac{(-1)^n}{n!}\frac{\alpha^{s+n}}{(s+n)},
\end{equation*}
we obtain
\begin{equation*}
\phi(z) = \sum_{n=0}^{\infty} \frac{(-\alpha)^n}{n!} \frac{1}{2\pi \i} \int_{k-\i\infty}^{k+\i     			\infty} \frac{1}{(s+n)}e^{[\ln{\alpha}-(\mu+\ln{z})]s+\frac{\sigma^2}{2}s^2}\d s
        + \frac{1}{2\pi \i} \int_{k-\i\infty}^{k+\i\infty} \Gamma(s,\alpha)e^{-(\mu+\ln{z})s+					\frac{\sigma^2}{2}s^2}\d s,
\end{equation*}
where $k>0$ (when necessary). The interchange of summation and integration in the first term is justified by Fubini's theorem and the fact that the integral can be bounded
by a Gaussian integral, independent of $n$. To complete the proof, we need to:
\begin{enumerate}
  \item[i)] compute $\frac{1}{2\pi \i} \int_{k-\i\infty}^{k+\i\infty} \frac{1}{(s+n)}e^{[\ln{\alpha}-(\mu+\ln{z})]s+\frac{\sigma^2}{2}s^2}\d s,\spc n\in\N\cup\{0\}$, and
  \item[ii)] bound  $\left|\frac{1}{2\pi \i} \int_{k-\i\infty}^{k+\i\infty} \Gamma(s,\alpha)e^{-(\mu+\ln{z})s+\frac{\sigma^2}{2}s^2}\d s\right|$.
\end{enumerate}
 
We compute the integral in i) using differentiation with respect to a parameter. Letting
\begin{equation*}
 F_n(w)=\frac{1}{2\pi \i} \int_{k-\i\infty}^{k+\i\infty} \frac{1}{(s+n)} e^{w(s+n)+\frac{\sigma^2}{2}s^2}\d s,
\end{equation*}
we have
\begin{equation*}
 F_n'(w) = \frac{1}{2\pi \i} \int_{k-\i\infty}^{k+\i\infty} e^{w(s+n)+\frac{\sigma^2}{2}s^2}\d s
         = \frac{1}{\sqrt{2\pi}\sigma} e^{wn-\frac{w^2}{2\sigma^2}},
\end{equation*}
and so, for $ w\in\r$,
\begin{equation*}
F_n(w) = F_n(-\infty)+\int_{-\infty}^{w} F_n'(y)\d y
       = 0+\frac{e^{\frac{\sigma^2}{2}n^2}}{\sqrt{2\pi}\sigma}\int_{-\infty}^{w} e^{-\frac{(y-				\sigma^2n)^2}{2\sigma^2}}\d y
       = \frac{e^{\frac{\sigma^2}{2}n^2}}{2}\erfc{\left(\frac{-w+\sigma^2n}{\sqrt{2}\sigma}					\right)}.
\end{equation*}
It can be shown that the interchange of integration and differentiation is justified, and $F_n(-\infty)=0$ using the dominated convergence theorem. Since the 
complementary error function is entire, the result extends to arguments in $\mathbb{C}$ by analytic continuation. Therefore
\begin{align*}
\frac{1}{2\pi \i} \int_{k-\i\infty}^{k+\i\infty} \frac{1}{(s+n)}e^{[\ln{\alpha}-(\mu+\ln{z})]s+\frac{\sigma^2}{2}s^2}\d s
&= e^{-[\ln{\alpha}-(\mu+\ln{z})]n}F_n\left(\ln{\alpha}-(\mu+\ln{z})\right)\\
&= \alpha^{-n}z^ne^{\mu n+\frac{\sigma^2}{2}n^2}\cdot\frac{1}{2} \erfc{\left(\frac{\mu+\ln{(z/\alpha)}+\sigma^2n}{\sqrt{2}\sigma}\right)}
\end{align*}

To bound the integral in ii), observe that the integrand is entire and we may choose any $k\in\r$ for the vertical contour. Choosing $k\leq1$, we have 
$\left|\Gamma(s,\alpha)\right|\leq e^{-\alpha}$ for $s=k+\i t,\phantom{a}t\in\r$, and so
\begin{align*}
\left|\frac{1}{2\pi \i} \int_{k-\i\infty}^{k+\i\infty} \Gamma(s,\alpha)e^{-(\mu+\ln{z})s+\frac{\sigma^2}{2}s^2}\d s\right|
& \leq  \frac{e^{-\alpha}}{2\pi} \int_{-\infty}^{\infty} e^{-(\mu+\ln|z|)k+\Arg{(z)}t+\frac{\sigma^2}{2}(k^2-t^2)}\d t\\
& \leq \frac{1}{\sqrt{2\pi}\sigma}e^{\frac{\pi^2}{2\sigma^2}+\frac{\sigma^2}{2}k^2-\alpha-k(\mu+\ln{|z|})}.
\end{align*}

We set $k=0$ to obtain the error term in \eqref{series_approx_1} and we choose $k$ to be negative to show \eqref{series_approx_1_asymptotic}.
\end{proof}

The following theorem presents an approximation of $\phi$ that improves as the parameter $\sigma$ increases.
\begin{thm}\label{thm4}
 Let $X \sim \LN(\mu,\sigma^2)$, and $M, N\in\N$. Then the Laplace transform of $X$ has expression
  \begin{align}\label{series_approx_2}
   \phi(z) &= \sum_{n=0}^{N} \frac{(-z)^n}{n!} e^{\mu n+\frac{\sigma^2}{2}n^2}\cdot\frac{1}{2}\erfc{\left(\frac{\mu+\ln{z}+\sigma^2n}{\sqrt{2}\sigma}\right)}\nonumber\\
    &+ \sum_{m=0}^{M} \frac{(-1)^ma_m}{\sqrt{2\pi}\sigma^{m+1}}e^{-\frac{(\mu+\ln{z})^2}{2\sigma^2}}H_m\left(-\frac{(\mu+\ln{z})}{\sigma}\right) +O(\sigma^{-M-2}).
 \end{align}
The function $\erfc$ is the complimentary error function, $H_m$ is the $m^{th}$ (probabilist's) Hermite polynomial defined by
 \begin{equation*}
  H_m(x):=(-1)^m e^{\frac{x^2}{2}} \frac{\d ^m}{\d x^m}e^{-\frac{x^2}{2}},
 \end{equation*}
 and the coefficients $a_m$ are defined by
 \begin{equation*}
  a_m = \frac{\Gamma^{(m+1)}(1)}{(m+1)!}+(-1)^{m+1}\cdot \sum_{j=1}^{N} \frac{(-1)^j}{j!}\frac{1}{j^{m+1}}.
 \end{equation*}
\end{thm}
\begin{proof}
Let $N\in\mathbb{N}$. Recall that the function $\Gamma$ has a simple pole at $s=-n,\phantom{a}n=0,1,2,\ldots$, with residue $\Res{(\Gamma,-n)}=(-1)^n/n!$.
We may remove the first $N+1$ poles of $\Gamma$ by writing
\begin{equation*}
 \Gamma(s)-\sum_{n=0}^{N}\frac{(-1)^n}{n!}\frac{1}{(s+n)}
\end{equation*}
to obtain a function, denoted $\gamma_N$, that is holomorphic on $\{s\in\mathbb{C}:|s|<N+1\}$. Thus, we write
 \begin{equation*}
  \Gamma(s)=\gamma_N(s)+\sum_{n=0}^{N} \frac{(-1)^n}{n!}\frac{1}{(s+n)},
 \end{equation*}
where, for $|s|<N+1$, we may write 
 \begin{equation*}
  \gamma_N(s)=\sum_{m=0}^{\infty} a_m s^m,
 \end{equation*}
 with $a_m=\gamma_N^{(m)}(0)/m!$. Note that as $\sigma\to\infty$, the mass of the integrand in the integral \eqref{int_expression_phi_2} is 
 increasingly supported on the set $\{k+\i t\in\mathbb{C}: t\in(-N-1,N+1)\}$. Thus, we choose $M\in\mathbb{N}$ and write
\begin{align}
  \phi(z) &= \frac{1}{2\pi \i} \int_{k-\i\infty}^{k+\i\infty}\left( \gamma_N(s)+\sum_{n=0}^{N} \frac{(-1)^n}{n!}\frac{1}{(s+n)}\right)e^{-(\mu+\ln{z})s+\frac{\sigma^2}{2}s^2}\d s\nonumber\\
             &= \sum_{n=0}^{N} \frac{(-1)^n}{n!}\cdot \frac{1}{2\pi \i} \int_{k-\i\infty}^{k+\i\infty}\frac{1}{(s+n)}e^{-(\mu+\ln{z})s+\frac{\sigma^2}{2}s^2} \d s \nonumber\\
             &+ \sum_{m=0}^{M} a_m\cdot \frac{1}{2\pi \i} \int_{-\i\infty}^{+\i\infty}s^m e^{-(\mu+\ln{z})s+\frac{\sigma^2}{2}s^2}\d s + R_M(z),
\end{align}
where,
 \begin{equation*}
  R_M(z)=\frac{1}{2\pi \i} \int_{-\i\infty}^{+\i\infty}\left( \gamma_N(s)-\sum_{m=0}^{M} a_ms^m\right)e^{-(\mu+\ln{z})s+\frac{\sigma^2}{2}s^2}\d s.
 \end{equation*}
 
To complete the proof we need to:
 \begin{enumerate}
  \item[i)] compute $\frac{1}{2\pi \i} \int_{k-\i\infty}^{k+\i\infty}\frac{1}{(s+n)}e^{-(\mu+\ln{z})s+\frac{\sigma^2}{2}s^2}\d s, \spc n\in\{0,1,\ldots,N\}$,
  \item[ii)] compute $a_m,\phantom{a} m\in\{0,1,\ldots,M\}$,
  \item[iii)] compute $ \frac{1}{2\pi \i} \int_{k-\i\infty}^{k+\i\infty}s^m e^{-(\mu+\ln{z})s+\frac{\sigma^2}{2}s^2}\d s,\spc m\in\{0,1,\ldots,M\}$, and
  \item[iv)] bound $|R_M(z)|$
 \end{enumerate}
 
The integral in i) was computed in the proof of Theorem 3. To determine ii), we need to compute $\gamma_N^{(m)}(0)$. Note that, for $|s|<1$, we may write 
\begin{equation*}
 \Gamma(s)-\frac{1}{s}=\sum_{j=0}^{\infty} b_js^j,
\end{equation*}
where $b_j=\Gamma^{(j+1)}(1)/(j+1)!$. So, for $|s|<1$, we may write
\begin{equation*}
 \gamma_N(s)= \sum_{j=0}^{\infty} b_js^j - \sum_{n=1}^{N} \frac{(-1)^n}{n!}\frac{1}{(s+n)}.
\end{equation*}
Differentiating, we have
\begin{equation*}
 \gamma_N^{(m)}(s)= \sum_{j=m}^{\infty} (j)_mb_js^{j-m}+(-1)^{m+1}m!\sum_{n=1}^{N} \frac{(-1)^n}{n!}\frac{1}{(s+n)^{m+1}}
\end{equation*}
and
\begin{equation*}
 \gamma_N^{(m)}(0)= m!b_m+(-1)^{(m+1)}m!\sum_{n=1}^{N}\frac{(-1)^n}{n!}\frac{1}{n^{m+1}}.
\end{equation*}
Therefore,
\begin{equation*}
 a_m = \frac{\gamma_N^{(m)}(0)}{m!}
     = b_m+(-1)^{(m+1)}\sum_{n=1}^{N}\frac{(-1)^n}{n!}\frac{1}{n^{m+1}}.
\end{equation*}
To compute iii), we let 
\begin{equation*}
 G(w) = \frac{1}{2\pi \i} \int_{k-\i\infty}^{k+\i\infty} e^{ws+\frac{\sigma^2}{2}s^2}\d s
      = \frac{1}{\sqrt{2\pi}\sigma}e^{-\frac{w^2}{2\sigma^2}}.
\end{equation*}
Then
\begin{equation*}
 G^{(m)}(w) =\frac{\d ^m}{\d w^m} \left(\frac{1}{\sqrt{2\pi}\sigma}e^{-\frac{w^2}{2\sigma^2}}\right)
            =\frac{1}{\sqrt{2\pi}\sigma}\cdot \frac{(-1)}{\sigma^m}^me^{-\frac{w^2}{2\sigma^2}}H_m\left(\frac{w}{\sigma}\right),
\end{equation*}
and therefore,
\begin{equation*}
 \frac{1}{2\pi \i} \int_{k-\i\infty}^{k+\i\infty}s^m e^{-(\mu+\ln{z})s+\frac{\sigma^2}{2}s^2}\d s
 = G^{(m)}\left(-(\mu+\ln{z})\right)
 = \frac{(-1)^m}{\sqrt{2\pi}\sigma^{m+1}}e^{-\frac{(\mu+\ln{z})^2}{2\sigma^2}}H_m\left(\frac{-(\mu+\ln{z})}{\sigma}\right),
\end{equation*}
where, again, it can be shown that the interchange of integration and differentiation is justified. Finally, to show iv), we note that 
\begin{equation*}
 \left|\gamma_N(s)-\sum_{m=0}^{M}a_m s^m\right| \leq C|s|^{M+1},\spc s\in \i\r,
\end{equation*}
for some $C>0$. To see this, observe that, for $|s|<N$, we have 
\begin{equation*}
 \left|\gamma_N(s)-\sum_{m=0}^{M}a_m s^m\right|=\left|\sum_{m=M+1}^{\infty} a_m s^m\right|=\left|\sum_{m=0}^{\infty} a_{M+1+m} s^m\right|\cdot|s|^{M+1}\leq C_1|s|^{M+1}.
\end{equation*}
We also have
\begin{equation*}
 \left|\gamma_N(s)\right|= \left|\Gamma(s)-\frac{1}{s} - \sum_{n=1}^{N}\frac{(-1)^n}{n!}\frac{1}{(s+n)}\right|
 \leq \left|\Gamma(s)-\frac{1}{s}\right| + \left|\sum_{n=1}^{N}\frac{(-1)^n}{n!}\frac{1}{(s+n)}\right|
 \leq C_2,\spc s\in \i\r,
\end{equation*}
so that, for $|s|\geq N$, we have
\begin{equation*}
 \left|\gamma_N(s)-\sum_{m=0}^{M}a_m s^m\right|\leq  \left|\gamma_N(s)\right|+\left|\sum_{m=0}^{M}a_m s^m\right| 
 \leq C_2 + C_3|s|^M
 \leq C_4|s|^{M+1}.
\end{equation*}
Thus,
\begin{equation*}
 |R_M(z)| \leq \frac{C}{2\pi} \int_{-\infty}^{\infty} |t|^{M+1} e^{\Arg{(z)}t-\frac{\sigma^2}{2}t^2}\phantom{,}dt
 = \frac{C}{2\pi} \int_{-\infty}^{\infty} \left|\frac{x}{\sigma}\right|^{M+1} e^{\frac{\Arg{(z)}}{\sigma}x-\frac{x^2}{2}}\phantom{,}\frac{1}{\sigma} dx
 \leq C'\sigma^{-M-2}.
\end{equation*}
\end{proof}

\subsection{Numerical examples}\label{section4.2}

We can compute $\phi(z)$, $z\in\C\setminus(-\infty,0]$, via numerical integration using either of the relations \eqref{int_expression_phi_1} or \eqref{int_expression_phi_2} given by 
Theorems \ref{thm1} or \ref{thm2}, respectively. If we choose to use the former, then we need to compute $\Phi(1,\mu + \ln{z}; \sigma)$ as
defined by \eqref{def_bigphi}. For simplicity, we will discuss the computation of $\Phi(1,a+\i b;\sigma)$, for $a,b\in\mathbb{R}$. 

Making the substitution $x\to e^{x}$ we have
\begin{equation*}
\Phi(1,a+\i b;\sigma)= C(a+\i b,\sigma)\int_{-\infty}^{\infty} \exp{\left[ -e^x - \frac{x^2}{2\sigma^2} +\frac{(a+\i b)x}{\sigma^2}\right]} \d x.
\end{equation*}
We can write this integral in the form
\begin{equation}\label{filon_integral}
 \int_{-\infty}^{\infty} g(x)e^{\i tx} \d x,
\end{equation}
where
\begin{equation*}
g(x) = \exp{\left[ -e^x - \frac{x^2}{2\sigma^2} +\frac{ax}{\sigma^2}\right]}, \text{ and } t=\frac{b}{\sigma^2}.
\end{equation*}

The integral \eqref{filon_integral} can be computed numerically using Filon's quadrature method \cite{Filon1928}. First, we determine an interval, $[x_0,x_{2N}]$, which supports most of the integrand's
mass and create a mesh consisting of $2N+1$ points, $x_j,\phantom{a}j=0,\ldots,2N$. The integral is then written as a 
sum of $N$ integrals over $[x_{2j},x_{2j+2}]$, $j=0,\ldots, N-1$:
\begin{equation*}
\int_{-\infty}^{\infty} g(x)e^{\i tx} \d x \approx \int_{x_0}^{x_{2N}} g(x)e^{\i tx} \d x
                                                  = \sum_{j=0}^{N-1} \int_{x_{2j}}^{x_{2j+2}} g(x)e^{\i tx} \d x.
\end{equation*}

On each subinterval $[x_{2j},x_{2j+2}]$, we approximate $g(x)$ with a second order Lagrange interpolating polynomial using the
data points $(x_{2j},g_{2j}),(x_{2j+1},g_{2j+1})$, and $(x_{2j+2},g_{2j+2})$, where $g_j:= g(x_j)$. Thus, with 
$g(x)\approx c_0^{(j)} + c_1^{(j)}x+c_2^{(j)}x^2$ on $[x_{2j},x_{2j+2}]$, we have 
\begin{equation}\label{filon_integral2}
 \int_{-\infty}^{\infty} g(x)e^{\i tx} \d x \approx \sum_{j=0}^{N-1} \int_{x_{2j}}^{x_{2j+2}} \left(c_0^{(j)} + c_1^{(j)}x+c_2^{(j)}x^2\right)e^{\i tx} \d x.
\end{equation}

The integrals on the right hand side of \eqref{filon_integral2} can be computed explicitly. With an appropriate interval of integration, 
$[x_0,x_{2N}]$, and $N$ sufficiently large, an accurate approximation of the integral \eqref{filon_integral} is obtained.

Theorem \ref{thm3} was used to numerically compute $\phi(z)$ for several real values of $z$; Table \ref{table1} shows the results 
corresponding to $\sigma= 0.0625,0.25,0.75,$ and $1$ with $\mu=0$. In each case, the expression in \eqref{series_approx_1} was truncated to 41 terms and evaluated using $\alpha=10$.
Table \ref{table2} displays the absolute difference (labeled AD) between $\phi(z)$ computed using \eqref{series_approx_1} and the value of $\phi(z)$ computed by way of numerical integration.

Theorem \ref{thm4} was used to numerically compute $\phi(z)$ for several real values of $z$; Table \ref{table3} shows the results 
corresponding to $\sigma= 1, 1.5, 2,$ and $2.5$ with $\mu=0$. In each case, \eqref{series_approx_2} was used with $N=5$, and $M=10$. Table \ref{table4} displays the absolute difference (labeled AD) between $\phi(z)$ computed using \eqref{series_approx_2} and the value of $\phi(z)$ computed by way of numerical integration.

\begin{table}
\caption{The function $\phi$ computed using \eqref{series_approx_1}, truncated to 41 terms, with $\alpha=10$.} 
\label{table1} 
\begin{center}
\def\arraystretch{1.5}
 \begin{tabular}{| c |c | c | c |  c |}
  \toprule[1.5pt]
  \phantom{z} & $\sigma=0.0625$&$\sigma=0.25$&$\sigma=0.75$&$\sigma=1$\\
  \hline
    $z$ & $\phi(z)$   & $\phi(z)$ & $\phi(z)$  & $\phi(z)$  \\
  \bottomrule[1.25pt]
0.5&0.60624&0.60196&0.57541&0.56171\\\hline
1&0.36788&0.36804&0.37469&0.38176\\\hline
1.5&0.22346&0.22825&0.26086&0.2807\\\hline
2&0.13586&0.14342&0.18984&0.21631\\\hline
3&0.050369&0.058656&0.10995&0.14025\\\hline
5&0.0070017&0.011065&0.045898&0.072028\\\hline
10&3.9289e-05&0.00028124&0.0096044&0.022991\\
\bottomrule[1.25pt]
 \end{tabular}
\end{center}
\end{table}

\begin{table}
\caption{absolute difference between $\phi(z)$ computed using \eqref{series_approx_1} and $\phi(z)$ computed using numerical integration.}
\label{table2}
\begin{center}
\def\arraystretch{1.5}
 \begin{tabular}{| c |c | c | c |  c |}
  \toprule[1.5pt]
  \phantom{z} & $\sigma=0.0625$&$\sigma=0.25$&$\sigma=0.75$&$\sigma=1$\\
  \hline
    $z$ & AD   & AD & AD   & AD   \\
  \bottomrule[1.25pt]
0.5&6.572520e-14&6.661338e-14&5.155796e-10&1.478849e-08\\\hline
1&1.110223e-16&4.013456e-14&1.456569e-08&9.738506e-08\\\hline
1.5&4.440892e-16&2.525757e-14&6.936278e-08&2.349823e-07\\\hline
2&2.775558e-17&1.468270e-14&1.760354e-07&3.975210e-07\\\hline
3&1.942890e-16&2.212219e-11&5.105122e-07&7.251790e-07\\\hline
5&1.756408e-15&6.980607e-08&1.292328e-06&1.225385e-06\\\hline
10&1.450124e-05&6.055084e-06&2.185399e-06&1.648996e-06\\
\bottomrule[1.25pt]
 \end{tabular}
\end{center}
\end{table}

\begin{table}
\caption{The function $\phi$ computed using \eqref{series_approx_2} with $N=5$, and $M=10$.}  
\label{table3}
\begin{center}
\def\arraystretch{1.5}
 \begin{tabular}{| c |c | c | c |  c |}
  \toprule[1.5pt]
  \phantom{z} & $\sigma=1$&$\sigma=1.5$&$\sigma=2$&$\sigma=2.5$\\
  \hline
    $z$ & $\phi(z)$   & $\phi(z)$ & $\phi(z)$  & $\phi(z)$  \\
  \bottomrule[1.25pt]
0.5&0.56169&0.54186&0.53012&0.523\\\hline
1&0.38175&0.39772&0.41216&0.42396\\\hline
1.5&0.28073&0.31674&0.34538&0.36751\\\hline
2&0.21634&0.26336&0.30039&0.32893\\\hline
3&0.14024&0.19613&0.24163&0.27744\\\hline
5&0.072008&0.12725&0.17708&0.21855\\\hline
10&0.023002&0.062944&0.10844&0.15117\\
\bottomrule[1.25pt]
 \end{tabular}
\end{center}
\end{table}

\begin{table}
\caption{absolute difference between $\phi(z)$ computed using \eqref{series_approx_2} and $\phi(z)$ computed using numerical integration.} 
\label{table4}
\begin{center}
\def\arraystretch{1.5}
 \begin{tabular}{| c |c | c | c |  c |}
  \toprule[1.5pt]
  \phantom{z} & $\sigma=1$&$\sigma=1.5$&$\sigma=2$&$\sigma=2.5$\\
  \hline
    $z$ & AD   & AD & AD   & AD   \\
  \bottomrule[1.25pt]
0.5&3.503349e-05&6.444704e-07&2.668369e-08&3.269640e-09\\\hline
1&1.716174e-05&3.009667e-08&1.325727e-09&9.603728e-10\\\hline
1.5&1.196279e-04&8.780255e-07&2.567335e-08&1.195540e-09\\\hline
2&1.371616e-04&1.352950e-06&4.380613e-08&2.832041e-09\\\hline
3&6.300887e-05&1.180623e-06&5.771800e-08&4.875454e-09\\\hline
5&2.828704e-04&7.533040e-07&4.059893e-08&6.225437e-09\\\hline
10&4.467548e-04&3.262143e-06&4.479331e-08&4.615517e-09\\
\bottomrule[1.25pt]
 \end{tabular}
\end{center}
\end{table}


\section{The density of a sum of independent lognormal random variables}
We have introduced two integral expressions which analytically continue the Laplace transform of 
$X \sim \LN(\mu,\sigma^2)$ to $\C\setminus(-\infty,0]$. We have also provided series approximations which may be used for numerical computations. In the last section of this paper, we consider an application which utilizes the analytic continuation of the Laplace transform of $X$.

In this section we discuss a method to numerically compute the density of a sum of independent lognormal random variables. In
this procedure, we obtain the density function by inverting the Laplace transform of the sum. 
Using the analytic continuation of the Laplace transform, we may deform the contour of the Bromwich integral 
into a Hankel contour and obtain an integral for which the integrand decays exponentially.\par
\begin{prop}\label{prop1}
Let $X_j \sim \LN(\mu_j,\sigma_j^2)$, $j=1,\ldots,n$, be independent and $X=\sum_{j=1}^{n}X_j$. Then X has density
\begin{equation}\label{density_integral}
f_X(x)= -\frac{1}{\pi} \int_{0}^{\infty} \im{[\phi(-t+\i \cdot0)]}e^{-tx} \d t, \spc x>0.
\end{equation}
Here $\phi$ and $f_X$ denote the Laplace transform and probability density function of $X$, respectively, and 
$\phi(-t +\i\cdot0)=\lim_{\epsilon\to0^+} \phi(-t+\i\epsilon)$.
\end{prop}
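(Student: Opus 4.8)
The plan is to recover $f_X$ from the Bromwich inversion of its Laplace transform and then to slide the vertical contour leftward until it collapses onto the branch cut $(-\infty,0]$. Write $\varphi_j$ for the Laplace transform of $X_j$ and $\varphi=\varphi_X$ for that of $X$. By independence, $\varphi(z)=\prod_{j=1}^{n}\varphi_j(z)$ on $\overline{\mathbb{C}^+}$, and since each $\varphi_j$ continues holomorphically to $\mathbb{C}\setminus(-\infty,0]$ by Proposition 1, the product of these continuations is the analytic continuation of $\varphi$, so $\varphi$ is holomorphic on the slit plane. The density $f_X=f_1*\cdots*f_n$ of the bounded continuous lognormal densities is itself bounded and continuous, and $e^{-cx}f_X(x)\in L^1(0,\infty)$ for every $c\ge 0$; since $\varphi(c+i\tau)$ is, up to the obvious change of variable, the Fourier transform of $x\mapsto e^{-cx}f_X(x)$, once we know that $\tau\mapsto\varphi(c+i\tau)$ is integrable, Fourier inversion yields the pointwise Bromwich formula
$$f_X(x)=\frac{1}{2\pi i}\int_{c-i\infty}^{c+i\infty}e^{zx}\varphi(z)\,dz,\qquad x>0,\ c>0.$$

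The workhorse is a single bound read off from the Mellin--Barnes representation (2.1). Taking absolute values in $\varphi_j(z)=\frac{1}{2\pi i}\int_{k-i\infty}^{k+i\infty}\Gamma(s)e^{-(\mu_j+\ln z)s+\frac{\sigma_j^2}{2}s^2}\,ds$ with $s=k+it$, and using $|\Gamma(k+it)|\le\Gamma(k)$, $|\Arg z|<\pi$, and the Gaussian factor $|e^{\frac{\sigma_j^2}{2}s^2}|=e^{\frac{\sigma_j^2}{2}(k^2-t^2)}$, one obtains
$$|\varphi_j(z)|\le C_{j,k}\,|z|^{-k},\qquad z\in\mathbb{C}\setminus(-\infty,0],\ k>0,$$
and therefore $|\varphi(z)|\le C_k\,|z|^{-nk}$ on the slit plane for every $k>0$. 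Choosing $k$ large makes $\varphi$ rapidly decreasing on vertical lines (in particular $\tau\mapsto\varphi(c+i\tau)$ is integrable, closing the gap above) and on large circular arcs contained in $\{\re z\le c\}$, on which also $|e^{zx}|\le e^{cx}$ since $x>0$; choosing $k<1/n$ makes $\varepsilon\,\sup_{|z|=\varepsilon}|\varphi(z)|\to0$ as $\varepsilon\to0^{+}$. Running the same estimate as $z\to -r\pm i0$ (dominated convergence in the Mellin--Barnes integral) shows that the boundary values $\varphi(-r\pm i0)$ exist and are locally bounded in $r>0$, and Schwarz reflection, $\varphi$ being real on $(0,\infty)$, gives $\varphi(-r-i0)=\overline{\varphi(-r+i0)}$.

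With these facts in hand the deformation is mechanical. Fix $x>0$ and $0<\varepsilon<c$, and apply Cauchy's theorem to the part with $|z|\le R$ of the region lying between the line $\re z=c$ and the Hankel contour $\mathcal H_\varepsilon$ that runs from $-\infty$ along the underside of the cut in to $-\varepsilon$, counterclockwise around the circle $|z|=\varepsilon$, then from $-\varepsilon$ back to $-\infty$ along the top of the cut. The connecting circular arcs vanish as $R\to\infty$ by the rapid decay of $\varphi$ together with $|e^{zx}|\le e^{cx}$, which gives $f_X(x)=\frac{1}{2\pi i}\int_{\mathcal H_\varepsilon}e^{zx}\varphi(z)\,dz$; this integral converges absolutely because $|e^{zx}|=e^{-rx}$ decays exponentially along the two rays. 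Letting $\varepsilon\to0^{+}$, the small circular arc contributes $0$, and on the two rays the contributions combine through $\varphi(-r-i0)-\varphi(-r+i0)=-2i\,\im[\varphi(-r+i0)]$, so that the prefactor $\frac{1}{2\pi i}$ produces exactly
$$f_X(x)=-\frac{1}{\pi}\int_{0}^{\infty}\im[\varphi(-t+i0)]\,e^{-tx}\,dt.$$

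I expect the real obstacle to be the uniform control of $\varphi$ at infinity and near the branch point that legitimises sliding the contour: the naive probabilistic bound $|\varphi(z)|\le\mathbb{E}[e^{-\re(z)X}]$ is infinite as soon as $\re z<0$, so the Mellin--Barnes estimate above (or the equivalent one extracted from Proposition 1) is essential, not cosmetic. The only other point that requires care is the Bromwich inversion formula itself, which I would establish either via the $L^{1}$-decay just described together with Fourier inversion, or by appealing to a standard Laplace inversion theorem.
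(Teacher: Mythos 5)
Your proposal is correct and follows essentially the same route as the paper: the decay estimate $|\varphi_j(z)|\le C_{j,k}|z|^{-k}$ extracted from the Mellin--Barnes representation (the paper's Lemma 1), deformation of the Bromwich contour onto a Hankel contour around the cut, and the reflection $\varphi(\bar z)=\overline{\varphi(z)}$ to collapse the two rays into $-2i\,\im[\varphi(-t+i0)]$. You are somewhat more explicit than the paper about justifying the Bromwich inversion formula itself and the existence of the boundary values $\varphi(-t\pm i0)$, but these are refinements of the same argument rather than a different approach.
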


We will use the following lemma in the proof of Proposition \ref{prop1}

\begin{lemma}\label{lemma1}
Let $\phi_j$ denote the Laplace transform of $X_j \sim \LN(\mu_j,\sigma_j^2)$. For every
$k>0$, $\phi_j(z)= O_k(|z|^{-k})$ as $|z|\to\infty$, $z\in\C\setminus(-\infty,0]$. Consequently, for every $k>0$, $\phi(z)= O_k(|z|^{-k})$ as $|z|\to\infty$, 
$z\in\C\setminus(-\infty,0]$.
\end{lemma}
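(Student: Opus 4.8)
The plan is to read off the super-polynomial decay directly from the Mellin--Barnes representation of Proposition 2, and then pass to the sum by factoring $\varphi$ over the independent summands.

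First I would fix $k>0$ and apply Proposition 2 to each $X_j$, keeping the vertical contour at $\re(s)=k$:
\[
\varphi_j(z)=\frac{1}{2\pi i}\int_{k-i\infty}^{k+i\infty}\Gamma(s)\,e^{-(\mu_j+\ln z)s+\frac{\sigma_j^2}{2}s^2}\,ds,\qquad z\in\mathbb{C}\setminus(-\infty,0].
\]
Writing $s=k+it$ and $\ln z=\ln|z|+i\Arg(z)$ with $\Arg(z)\in(-\pi,\pi)$, a direct computation gives
\[
\bigl|e^{-(\mu_j+\ln z)s+\frac{\sigma_j^2}{2}s^2}\bigr|=e^{-k(\mu_j+\ln|z|)+t\,\Arg(z)+\frac{\sigma_j^2}{2}(k^2-t^2)}.
\]
Pulling the modulus inside the integral (justified by absolute convergence) and using the elementary inequalities $t\,\Arg(z)\le\pi|t|$ and $|\Gamma(k+it)|\le\int_0^\infty\tau^{k-1}e^{-\tau}\,d\tau=\Gamma(k)$, I obtain
\[
|\varphi_j(z)|\le\frac{\Gamma(k)}{2\pi}\,e^{-k\mu_j+\frac{\sigma_j^2}{2}k^2}\left(\int_{-\infty}^{\infty}e^{\pi|t|-\frac{\sigma_j^2}{2}t^2}\,dt\right)|z|^{-k}=:C_{j,k}\,|z|^{-k}.
\]
The Gaussian factor makes the $t$-integral finite, and $C_{j,k}$ is independent of $z$; hence $\varphi_j(z)=O_k(|z|^{-k})$ as $|z|\to\infty$ on $\mathbb{C}\setminus(-\infty,0]$.

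For the stated consequence I would use that independence of $X_1,\dots,X_n$ gives $\varphi(z)=\prod_{j=1}^n\varphi_j(z)$ whenever $\re(z)\ge0$, and that this identity extends to all of $\mathbb{C}\setminus(-\infty,0]$ by uniqueness of analytic continuation (each $\varphi_j$ being holomorphic there by Propositions 1--2). Then $|\varphi(z)|\le\prod_{j=1}^n C_{j,k}\,|z|^{-k}$, which is $O_k(|z|^{-nk})$ and a fortiori $O_k(|z|^{-k})$ as $|z|\to\infty$; since $k>0$ was arbitrary, the lemma follows.

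I do not expect a genuine obstacle here. The only points that need care are the interchange of modulus and integral (covered by absolute convergence, since the Gaussian $e^{-\sigma_j^2t^2/2}$ dominates the $e^{\pi|t|}$ coming from $\Arg(z)$ while $|\Gamma(k+it)|$ is bounded by $\Gamma(k)$), and the fact that the implied constant is uniform over the whole cut plane --- which is exactly what the restriction $|\Arg(z)|<\pi$ provides.
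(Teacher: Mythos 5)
Your proof is correct and follows essentially the same route as the paper: bound the Mellin--Barnes integrand via $|z^{-s}|=|z|^{-k}e^{t\Arg(z)}\le |z|^{-k}e^{\pi|t|}$ and $|\Gamma(k+it)|\le\Gamma(k)$, with the Gaussian factor ensuring convergence, then factor $\varphi$ over the independent summands. The only (immaterial) difference is that the paper applies the first part with exponent $r=k/n$ to each factor so the product is exactly $O(|z|^{-k})$, whereas you get $O(|z|^{-nk})$ and observe it is a fortiori $O(|z|^{-k})$.
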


\begin{proof}[Proof of Lemma 1] Let $k>0$ and $z\in\mathbb{C}\setminus (-\infty,0]$. By Theorem \ref{thm2}, we have
\begin{equation*}
\phi_{j}(z)= \frac{1}{2\pi} \int_{-\infty}^{\infty} z^{-(k+\i t)}\Gamma(k+\i t)e^{-\mu_j (k+\i t) +\frac{\sigma_{j}^2}{2}(k+\i t)^2}\d t,
\end{equation*}
and thus,
\begin{equation*}
|\phi_{j}(z)| \leq \frac{1}{2\pi} \int_{-\infty}^{\infty} |z|^{-k}e^{\pi |t|}\Gamma(k)e^{-\mu_j k+\frac{\sigma_j^2}{2}(k^2-t^2)}\d t= M_{k,j}|z|^{-k}.
\end{equation*}

To prove the second part of the lemma, let $k>0$ and take $r=k/n$. Then, by the independence of the $X_j$'s and the first 
part of the lemma,
\begin{equation*}
|\phi(z)| = \prod_{j=1}^{n} |\phi_j(z)|
             \leq \prod_{j=1}^{n} M_{r,j}|z|^{-r}
             = M_k|z|^{-k}
\end{equation*}
where, $M_k=\prod_{j=1}^{n} M_{r,j}$.
\end{proof}

\begin{proof}[Proof of Proposition 5] The density function of $X$, obtained by the inverse Laplace transform, is given by the Bromwich integral 
\begin{equation}\label{bromwich_integral}
f_X(x)= \frac{1}{2\pi \i} \int_{c-\i\infty}^{c+\i\infty} \phi(z)e^{zx} \d z,
\end{equation}
for any $c>0$. Using the analytic continuation of $\phi$, the integrand of \eqref{bromwich_integral} is analytic on $\C\setminus(-\infty,0]$ and we can deform the contour to the contour 
$\Gamma_R=\gamma_1^{(-)}+\gamma_2^{(-)}+H_R+\gamma_2^{(+)}+\gamma_1^{(+)}$, shown in Figure \ref{fig1}, for any $R>0$. The density function is now given by
\begin{equation}
f_X(x)= \frac{1}{2\pi \i} \int_{\Gamma_R}\phi(z)e^{zx} \d z.
\end{equation}

\begin{figure}
\centering
\begin{tikzpicture}
[decoration={markings,
mark=at position 0.5cm with {\arrow[line width=1pt]{>}},
mark=at position 3.25cm with {\arrow[line width=1pt]{>}},
mark=at position 6.25cm with {\arrow[line width=1pt]{>}},
mark=at position 8.25cm with {\arrow[line width=1pt]{>}},
mark=at position 10.5cm with {\arrow[line width=1pt]{>}},
mark=at position 13.5cm with {\arrow[line width=1pt]{>}},
mark=at position 16.25cm with {\arrow[line width=1pt]{>}}
}
]
\draw[help lines,->] (-4,0) -- (4,0) coordinate (xaxis);
\draw[help lines,->] (0,-4) -- (0,4) coordinate (yaxis);

\path[draw,line width=0.8pt,postaction=decorate] (1,-4)--(1,-3) node[right]{$c-iR$} to[bend left] (-2,-0.5) -- (-0.5,-0.5) arc(-135:135:0.707)--(-2,0.5) to[bend left] (1,3) node[right]{$c+iR$}--(1,4);

\foreach \Point in {(1,-3), (1,3), (-2,-0.5), (-2,0.5)}{
    \node at \Point {\textbullet};
    }
\node[below] at (xaxis) {$\re{(z)}$};
\node[left] at (yaxis) {$\im{(z)}$};
\node[below right] {$0$};
\node at (-1,-2.75) {$\gamma_2^{(-)}$};
\node at (-1,2.75) {$\gamma_2^{(+)}$};
\node at (0.5,-3.6) {$\gamma_1^{(-)}$};
\node at (0.5,3.6) {$\gamma_1^{(+)}$};
\node at (1,0.3) {$H_R$};
\end{tikzpicture}
\caption{ The contour  $\Gamma_R=\gamma_1^{(-)}+\gamma_2^{(-)}+H_R+\gamma_2^{(+)}+\gamma_1^{(+)}$}
\label{fig1}
\end{figure}

We will show that the contributions of the contours $\gamma_1^{(+)}$ and $\gamma_2^{(+)}$ go to zero as $R\rightarrow0$. Similarly, the contributions of $\gamma_1^{(-)}$ and $\gamma_2^{(-)}$ go to zero and as a result 
$\Gamma_R\to H$ , as $R\to\infty$, where $H$ is the Hankel contour in Figure \ref{fig2}. We first consider the contour $\gamma_1^{(+)}$ parametrized 
by $z(t)=c+\i t$, $t\in[R,\infty)$. For every $x>0$, we have
\begin{align*}
\left| \frac{1}{2\pi \i} \int_{\gamma_1^{(+)}} \phi(z)e^{zx}\d z\right|
& \leq \frac{1}{2\pi} \int_{R}^{\infty} \left| \phi(c+\i t)e^{(c+\i t)x}\right|\d t\\
& \leq \frac{1}{2\pi}e^{cx}M_2 \int_{R}^{\infty} |c+\i t|^{-2} \d t = \frac{1}{2\pi}e^{cx} M_2\int_{R}^{\infty} \frac{1}{c^2+t^2} \d t \longrightarrow 0, \text{ as } R\to \infty,
\end{align*}
where we have used lemma \ref{lemma1} with $k=2$. Next we consider the contour $-\gamma_2^{(+)}$ parametrized by 
$z(t)=c+Re^{\i t}$, $t\in[\frac{\pi}{2},\pi-\theta_R]$, where $\theta_R \to 0$ as $R\to\infty$. Since
\begin{equation*}
|z(t)|=\left|c+Re^{\i t}\right|\geq R-c = \left(1-\frac{c}{R}\right)R,
\end{equation*}
there exists $R'>0$ such that $|z(t)|\geq \frac{1}{2}R$ when $R>R'$. So for every $x>0$, and $R>R'$, we have 
\begin{align*}
\left| \frac{1}{2\pi i} \int_{\gamma_2^{(+)}} \varphi(z)e^{zx}\phantom{a}dz\right|
& \leq \max_{z\in-\gamma_2^{(+)}} \left\{ |\varphi(z)|e^{\re{(z)}x}\right\} \cdot \frac{\pi}{2} R\\
& \leq \max_{t\in[\frac{\pi}{2},\pi-\theta_R]} \left\{M_k|z(t)|^{-k}e^{(c+R\cos{t})x}\right\}\cdot\frac{\pi}{2} R\\
& \leq  M_k\left(\frac{1}{2}R\right)^{-k} e^{(c+R\cos{(\frac{\pi}{2})})x}\cdot\frac{\pi}{2} R = \pi2^{k-1} M_ke^{cx}R^{-k+1}\longrightarrow0, \text{ as } R\to\infty,
\end{align*}
when we use lemma 1 with any $k>1$. Thus, taking $R\to\infty$, we have
\begin{equation}\label{density_hankel}
f_X(x)= \frac{1}{2\pi \i} \int_{H}\phi(z)e^{zx} \d z.
\end{equation}

\begin{figure}
\centering
\begin{tikzpicture}
[decoration={markings,
mark=at position 1cm with {\arrow[line width=1pt]{>}},
mark=at position 4.1cm with {\arrow[line width=1pt]{>}},
mark=at position 7.45cm with {\arrow[line width=1pt]{>}}
}
]
\draw[help lines,->] (-3,0) -- (3,0) coordinate (xaxis);
\draw[help lines,->] (0,-2) -- (0,2) coordinate (yaxis);

\path[draw,line width=0.8pt,postaction=decorate] (-3,-0.5)-- (-0.5,-0.5) arc(-135:135:0.707)--(-3,0.5);

\draw[dashed,->] (-1.5,0)--(-1.5,0.5);
\draw[dashed,->] (0,0)--(-0.5,0.5);
\draw[-] (-0.25,0) arc(180:135:0.25);
\draw[dashed,->] (0,0)--(0.5,0.5);
\node[below] at (xaxis) {$\re{(z)}$};
\node[left] at (yaxis) {$\im{(z)}$};
\node[below right] {$0$};
\node at (1,0.3) {$H$};
\node at (-1.25,0.25) {\small{$\epsilon$}};
\node at (-0.45,0.15) {\small{$\theta_\epsilon$}};
\node at (0.2,0.45) {\small{$\delta$}};
\end{tikzpicture}
\caption{ The Hankel contour $H$}
\label{fig2}
\end{figure}

The contour $H$ is defined $\forall\delta>0$ and $\forall \epsilon\in(0,\delta)$ and it is clear that $\theta_\epsilon\to0$ as $\epsilon\to0$. Rewriting \eqref{density_hankel} as 
\begin{equation*}
f_X(x)= \frac{1}{2\pi \i} \left\{\left (\int_{h_1}+\int_{h_2}+\int_{h_3}\right)\phi(z)e^{zx} \d z\right\},
\end{equation*}
where,
\begin{align*}
& \int_{h_1} \phi(z)e^{zx} \d z = \int_{\delta\cos{\theta_\epsilon}}^{\infty} \phi(-t-\i\epsilon)e^{(-t-\i\epsilon)x} \d t,\\
& \int_{h_2} \phi(z)e^{zx} \d z = \i\delta\int_{-\pi + \theta_\epsilon}^{\pi - \theta_\epsilon} \phi(\delta e^{\i t})e^{\delta e^{\i t}x+\i t}\d t, \text{ and}\\
& \int_{h_3} \phi(z)e^{zx} \d z = -\int_{\delta\cos{\theta_\varepsilon}}^{\infty} \varphi(-t+\i\epsilon)e^{(-t+\i\epsilon)x} \d t,
\end{align*}
and using the fact that $\overline{\phi(z)}=\phi(\overline{z})$, we have 
\begin{align*}
f_X(x) &= \frac{1}{2\pi \i} \Bigg\{ \i\delta\int_{-\pi + \theta_\epsilon}^{\pi - \theta_\epsilon} \phi(\delta e^{\i t})e^{\delta e^{\i t}x+\i t}\d t
         -2\i \im{\left[\int_{\delta\cos{\theta_\epsilon}}^{\infty} \phi(-t+\i\epsilon)e^{(-t+\i\epsilon)x} \d t\right]}\Bigg\}\\
       & \longrightarrow -\frac{1}{\pi} \im{\left[\int_{0}^{\infty} \phi(-t+\i \cdot0)e^{-tx} \d t\right]}, \text{ as } \epsilon,\delta \to 0\\
       &=-\frac{1}{\pi} \int_{0}^{\infty} \im{[\phi(-t+\i\cdot0)]}e^{-tx} \d t.
\end{align*}
The interchange of the limit and integration can be justified by dominated convergence. 
\end{proof}

To utilize Proposition \ref{prop1}, one must first compute $\phi_j(-t+\i\cdot0)$, $j=1,\ldots,n$. This can be performed using the methods from Section \ref{section4}, or any alternative method (for example, see \cite{Bondesson2002}). Since the random variables, $X_j$, $j=1,\ldots,n$, are independent we have
\begin{equation*}
 \phi(-t+\i\cdot0)=\prod_{j=1}^{n}\phi_j(-t+\i\cdot0).
\end{equation*}
We can compute the integral in \eqref{density_integral} in a similar fashion to the numerical integration method of Section \ref{section4.2}.

To illustrate the method, we computed the Laplace transform of $X \sim \LN(0,1)$ using the Theorem \ref{thm3} and used the inversion formula of Proposition \ref{prop1} to obtain the density. Figure \ref{fig3a} shows a plot with both the closed form of $f_X$ and our approximation. Figure \ref{fig3b} shows the relative error of the approximation.

\begin{figure}
\centering
\begin{subfigure}{.5\textwidth}
  \centering
  \includegraphics[width=\linewidth]{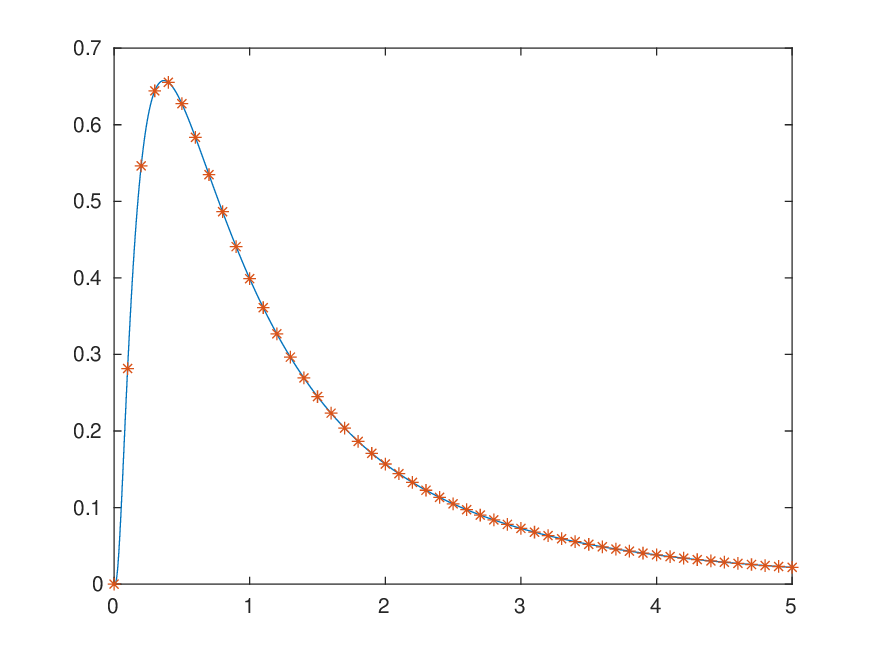}
  \caption{Actual density and our approximation.}
  \label{fig3a}
\end{subfigure}%
\begin{subfigure}{.5\textwidth}
  \centering
  \includegraphics[width=\linewidth]{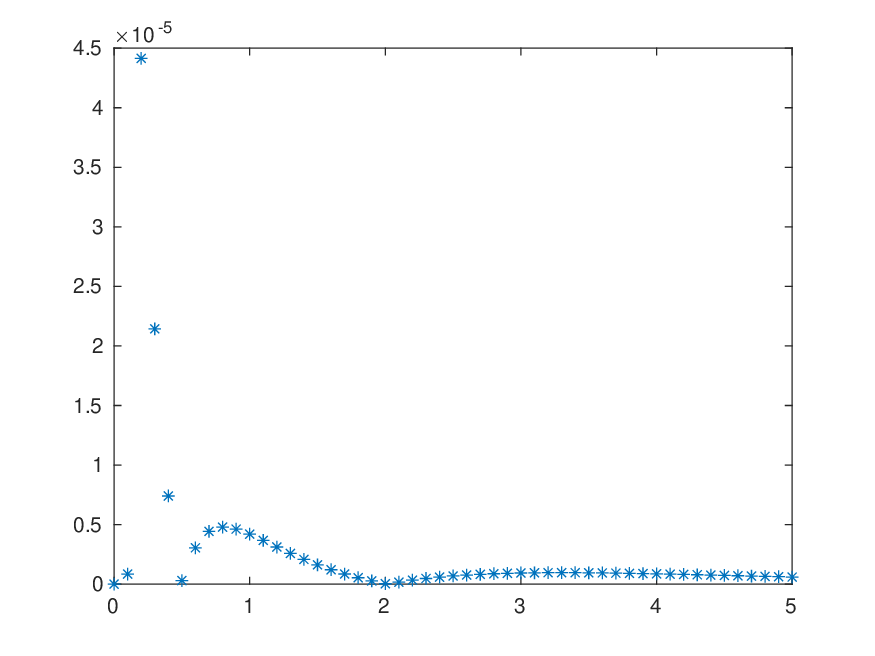}
  \caption{Relative error}
  \label{fig3b}
\end{subfigure}
\caption{Approximating the density of $X \sim \LN(0,1)$}
\label{Figure3}
\end{figure}


\section{Conclusion}\label{conclusion}

We have presented two derivations of the analytic continuation of the Laplace transform of the lognormal distribution, which we denote by $\phi$. Since the Mellin 
transform of $\phi$ has closed form, we used the Mellin inversion formula to express $\phi$ in the form of a Mellin-Barnes integral. As a consequence, we 
obtained the corresponding expression for the characteristic function of the lognormal distribution. This expression is slightly different from the expression derived 
by Leipnik in \cite{Leipnik1991}; we claim his expression is incorrect.

Using the Mellin-Barnes expression for $\phi$, we obtained two approximations which may be used in numerical computations. The error of the first approximation (see Theorem \ref{thm3})
can be made arbitrarily small and the approximation is asymptotic to $\phi$ as the magnitude of the argument goes to zero. 
The second approximation (see Theorem \ref{thm4}) improves as the parameter $\sigma$ goes to infinity. Both approximations were shown to provide accurate results, however, we note that computation can be difficult if too many terms of the series employed.

In the last section, we showed how one may use the analytic continuation of the Laplace transform of a sum of independent lognormals to compute the density, via 
Laplace inversion. By deforming the vertical contour of the Bromwich integral to a Hankel contour, one may obtain a real integral for which the integrand decays 
exponentially. The result is an integral that can be computed numerically with ease.

The analytic continuation of the Laplace transform of the lognormal distribution has other applications. In 1977, Olof Thorin showed that the lognormal distribution is a Generalized Gamma Convolution
(GGC) (see \cite{Thorin1977}). A GGC is a probability distribution $F$ on $[0,\infty)$ with moment-generating function (mgf) of the form 
\begin{equation*}
 M(s)= \exp{\left[as+\int_{0}^{\infty}\ln{\left(\frac{t}{t-s}\right)} U(\d t)\right]}, \spc s\leq0 \text{ (or }s\in\C
 \setminus(0,\infty)\text{)},
\end{equation*}
where $a\geq0$ and $U(\d t)$ is a nonnegative measure, called the Thorin measure, on $(0,\infty)$ satisfying
\begin{equation*}
 \int_{(0,1]} | \ln{t}| U(\d t)<\infty \text{, and } \int_{(1,\infty)} t^{-1}U(\d t)<\infty,
\end{equation*}
(\cite{Bondesson1992}, pg. 29). As Bondesson discusses in \cite{Bondesson1992} and \cite{Bondesson2002}, one may compute the density of the Thorin measure using the
analytic continuation of the Laplace transform of the lognormal distribution. The density, denoted here by U, can be computed using the formula
\begin{equation*}
 U(t) = \frac{1}{\pi}\text{Im}\left[\frac{\phi'(-t+\i\cdot0)}{\phi(-t+\i\cdot0)}\right],
\end{equation*}
where $\phi(-t +\i\cdot0)=\lim_{\epsilon\to0^+} \phi(-t+\i\epsilon)$ (equivalently, as in Bondesson's derivation, one may approach the negative real line from 
below and multiply the result by -1).

\section*{Acknowledgment}
I would like to express my gratitude to Alexey Kuznetsov for his guidance and advice.


\end{document}